\newtheorem{theorem}{Theorem}[section]
\newtheorem{proposition}[theorem]{Proposition}
\newtheorem{definition}[theorem]{Definition}
\theoremstyle{remark}
\newtheorem{remark}[theorem]{\bf Remark}
\newtheorem{example}[theorem]{\bf Example}
\newtheorem{remarks}[theorem]{\bf Remarks}
\renewcommand{\leq}{\leqslant}
\renewcommand{\geq}{\geqslant}
\newcommand{\ptl}{\partial}
\newcommand{\wtilde}{\widetilde}
\newcommand{\rr}{\mathbb{R}}
\newcommand{\nn}{\mathbb{N}}
\newcommand{\C}{\mathscr{C}_2}
\numberwithin{equation}{section}
\begin{document}

\title[Minimizing bisections for the maximum relative diameter]
{Bisections of centrally symmetric planar convex bodies minimizing the maximum relative diameter}

\author[A. Ca\~nete]{Antonio Ca\~nete}
\address{Departamento de Matem\'atica Aplicada I \\ Universidad de Sevilla}
\email{antonioc@us.es}

\author[S. Segura Gomis]{Salvador Segura Gomis}
\address{Departamento de An\'alisis Matem\'atico \\ Universidad de Alicante}
\email{salvador.segura@ua.es}

\subjclass[2010]{52A10, 52A40}
\keywords{Centrally symmetric planar convex bodies, maximum relative diameter}

\begin{abstract}
In this paper we study the bisections of a centrally symmetric planar convex body
which minimize the maximum relative diameter functional.
We give necessary and sufficient conditions for being a minimizing bisection,
as well as analyzing the behavior of the so-called standard bisection.
\end{abstract}

\maketitle

\section{Introduction}

Historically, the classical geometric functionals (perimeter, area, volume, inradius, circumradius, diameter and width), and
the relations between them, have been intensely studied, yielding a great variety of optimization problems~\cite{sa,cfg}.
Possibly, the most relevant example is the \emph{isoperimetric problem},
examining the relation between the area and the volume of sets in $\rr^n$ \cite{S,O}.
Moreover, in the setting of Convex Geometry, these functionals play an important role
and can be considered as the origin of this theory.

In this context, we shall focus on a particular relative geometric problem concerning the \emph{diameter} functional.
This is one of the most natural magnitudes for measuring the size of a set, and has been deeply considered in the literature.
Some well-known important results in $\rr^n$ regarding this functional are, for instance,
\emph{Jung's theorem} \cite{jung}, establishing the inequality between the diameter and the circumradius of a compact set,
the \emph{isodiametric inequality} \cite{bieberbach},
which asserts that the ball is the compact convex set of fixed volume with the minimum possible diameter,
or \emph{Borsuk's conjecture} \cite{borsuk}, asking whether any compact set $K$ can be divided into $n+1$ subsets
whose diameters are striclty less than the diameter of $K$.
Additionally, more inequalities involving the diameter and other classical functionals
for planar compact convex sets can be found in \cite{sa}.

In this work we shall consider the \emph{maximum relative diameter} functional in $\rr^2$,
which is defined
in the following way:
for a fixed planar compact convex set $C$,
a division of $C$ into two connected subsets determined by a simple
curve with endpoints in the boundary of $C$
will be called a \emph{bisection} of $C$.
Then, given a bisection $P$ of $C$ with subsets $C_1$, $C_2$ (not necessarily enclosing equal areas),
the maximum relative diameter associated to $P$ is
$$d_M(P)=\max\{D(C_1), D(C_2)\},$$
where $D(S)$ denotes the Euclidean diameter of a planar set $S$.
In view of this definition, the maximum relative diameter clearly represents the largest distance in the subsets generated by the bisection.
In this setting, we are interested in finding the \emph{optimal bisection} for the maximum relative diameter functional.
That is, among all the bisections of $C$,
we look for the one attaining the \emph{minimum possible value} for this functional, 
which can be considered as the division of $C$ with both subsets \emph{as small as possible} in terms of the diameter.

Throughout this paper, we shall assume that our sets are centrally symmetric.
This hypothesis provides enough geometric structure to deal with this problem,
allowing to obtain descriptive results for the minimizing bisections
(in the non-symmetric case, it seems not possible to find similar properties for the optimal bisections,
due to the lack of symmetry).
Moreover, considering divisions of the sets into \emph{two} subsets is something naturally inherent to central symmetry.
On the other hand, we point out that if we focus on bisections by straight lines in the class of compact convex sets,
the minimum value for the maximum relative diameter is precisely attained by a centrally symmetric set \cite[Th.~7]{mps}
so, in some sense, this kind of sets is certainly suitable for this functional.

Our main results refer to the minimizing bisections for the maximum relative diameter
of centrally symmetric planar compact convex sets.
We shall see that we do not have uniqueness of solution for this problem,
since proper slight modifications of a minimizing bisection will be also minimizing
(this is a common feature when working with the diameter functional).
Moreover, in order to find a minimizing bisection, our Proposition~\ref{prop:areas}
assures that it is enough to focus on the bisections given by a straight line passing
through the center of symmetry of the set
(which will always generate symmetric subsets enclosing equal areas).
This property agrees with the intuitive idea that the corresponding subsets of an optimal bisection
must be as \emph{balanced} as possible.
Proposition~\ref{prop:necesaria} shows a necessary condition for a bisection of this type to be minimizing
(expressed in terms of the farthest distances from its two endpoints),
which is complemented in our Theorem~\ref{th:main}, establishing a criterion for asserting that a bisection
(by a straight line passing through the center of symmetry) is minimizing.

This partitioning optimization problem has been already considered in \cite{mps},
but with the additional restriction of bisections with \emph{equal-area subsets}.
In this setting, among other results, it is proved that a minimizing bisection
is always given by a straight line passing through the center of symmetry of the set \cite[Prop.~4]{mps},
with no further description of the properties of the solutions.
Our work is inspired in this paper, with the aim of extending the results therein
to a more general situation (arbitrary bisections with non-equal area subsets),
and describing the minimizing bisections in a more precise way.

It is worth mentioning that the analogous question
for divisions into a \emph{larger number of subsets} has been also studied:
for a given a $k$-rotationally symmetric planar compact convex set $C$,
with $k\in\nn$, $k\geq3$,
a $k$-partition of $C$ is a decomposition of $C$  into $k$ connected subsets $C_1,\dots,C_k$,
determined by $k$ simple curves starting in $\ptl C$, all of them meeting in an interior point of $C$.
And similarly, given a $k$-partition $P$ of $C$, we can define the maximum relative diameter of $P$ as
$$d_M(P)=\max\{D(C_i):i=1,\dots,k\}.$$
In this context, we can investigate the $k$-partitions of $C$ attaining the minimum value for $d_M$.
This was treated in \cite{extending} (see also~\cite{trisecciones}), where it is
proved that the so-called \emph{standard $k$-partition}
(constructed by using $k$ inradius segments symmetrically placed, see Figure~\ref{fig:some})
is a solution for this problem, for any $k\geq 3$.

\begin{figure}[h]
  \includegraphics[width=0.73\textwidth]{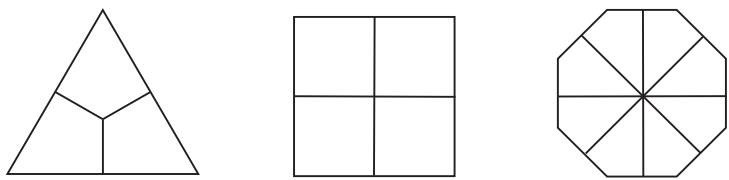}\\
  \caption{Some standard $k$-partitions for $k\geq 3$}\label{fig:some}
\end{figure}

\noindent We shall see in Section~\ref{se:standard} that the previous result does not hold in our setting
(which would correspond to $k=2$):
a \emph{standard bisection} (consisting of two symmetric inradius segments) is not minimizing in general
(for instance, see Example~\ref{ex:non}).
In fact, the standard bisection of a given centrally symmetric planar compact convex set
could not even be uniquely defined, as shown in Example~\ref{ex:capbody}.
These are two remarkable differences with respect to the case of
$k$-rotationally symmetric planar compact convex sets, with $k\geq 3$.

We have organized this paper as follows.
In Section~\ref{se:pre} we give the precise definitions and statement of our problem.
Section~\ref{se:main} contains our main results.
We prove in Proposition~\ref{prop:areas} that for any minimizing bisection,
we can find another bisection given by a straight line passing through the center of
symmetry of the set with the same value for the maximum relative diameter,
and therefore we can focus on this type of bisections in the search of a minimizing one.
Taking into account this,
Proposition~\ref{prop:necesaria} gives a necessary condition for minimizing bisections,
suggesting, in some sense, that it is needed the existence of a certain symmetry,
related to the farthest distances with respect to the endpoints.
Finally, Theorem~\ref{th:main} states some conditions for assuring
that a given bisection (by a straight line passing through the center of symmetry) is minimizing.
Unfortunately, this result is not completely sharp,
since there are examples of minimizing bisections whom Theorem~\ref{th:main} is not applicable, see Example~\ref{ex:6-gon}.
In Section~\ref{se:standard} we discuss the main features of the standard bisection,
showing that it is not minimizing in general (see Examples~\ref{ex:non} and~\ref{ex:salvador}),
and that it may not be uniquely defined (see Subsection~\ref{sub:uniqueness}).
Section~\ref{se:examples} contains several examples, showing some minimizing
bisections in each case by using Theorem~\ref{th:main}.
And we complete these notes with some comments of interest in Section~\ref{se:comments}.

\section{Preliminaries}
\label{se:pre}

Let us denote by $\mathscr{C}_2$ the class of centrally symmetric planar convex bodies
(recall that a body is, as usual, a compact set). The central symmetry of a set $C\in\C$
means that there exists a point $p\in C$ (called the \emph{center of symmetry} of $C$) such that
$C$ is invariant under the action of the rotation of angle $\pi$ centered at $p$.
Some examples of sets of this class are depicted in Figure~\ref{fig:examples}.

\begin{figure}[ht]
    \includegraphics[width=0.88\textwidth]{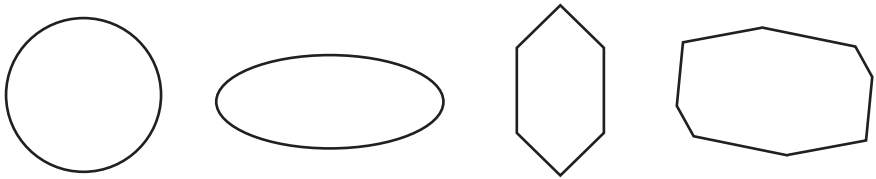}\\
  \caption{Some centrally symmetric planar convex bodies}
  \label{fig:examples}
\end{figure}

Throughout this paper, we shall focus on some particular divisions of our sets, called \emph{bisections}.
In Remark~\ref{re:justify} we shall justify that these are the most convenient divisions for our problem.
Note that the following definition can be done in a more general setting.

\begin{definition}
\label{de:bisection}
Let $C\in\C$.
A bisection of $C$ is a decomposition of $C$ into two connected
subsets, given by a simple curve with endpoints in the boundary $\ptl C$ of $C$.
\end{definition}

\begin{remark}
We point out that the curve determining a given bisection does not contain, in general, the center of symmetry of the set,
and moreover, the corresponding subsets do not enclose necessarily equal areas, as shown in Figure~\ref{fig:bisections}.
\end{remark}

\begin{figure}[h]
    \includegraphics[width=0.95\textwidth]{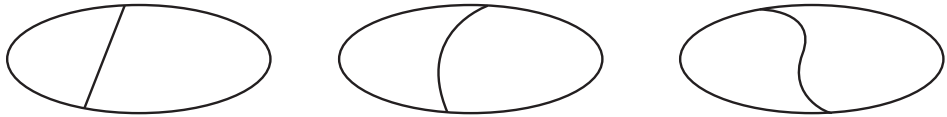}\\
  \caption{Three different bisections for an ellipse}\label{fig:bisections}
\end{figure}

We now proceed to define the geometric functional considered in this work,
previously introduced in \cite{mps}.

\begin{definition}
Let $C\in\C$, and let $P$ be a bisection of $C$, with associated subsets $C_1$, $C_2$.
The maximum relative diameter of $P$ is defined as
$$d_M(P)=\max\{D(C_1),D(C_2)\}, $$
where $D(S)$ denotes the Euclidean diameter of $S$.
\end{definition}

\begin{remark}
\label{re:boundary}
Recall that the diameter of a planar compact set is always attained by a pair of points lying in the boundary of the set,
and in the case of a polygon, by two of its vertices.
\end{remark}

For a fixed centrally symmetric planar convex body $C$, the purpose of these notes
is investigating the bisections of $C$ that \emph{minimize} the maximum relative diameter functional,
in the same spirit as in \cite{trisecciones,extending}, see also \cite{mps}:
determining these bisections precisely or, at least, describing some of their geometrical properties.
These bisections will be called \emph{minimizing} along this paper.
In this direction, some partial results have been obtained
in the case of bisections providing \emph{equal-area subsets}:
in this more restrictive setting, it has been proved that, for any set $C\in\C$, there always exists a minimizing bisection
given by a \emph{straight line passing through the center of symmetry} of the set~\cite[Prop.~4]{mps}.
However, no additional details have been outlined for the solutions, and nothing else is known.
We shall consider this problem in the most general setting
(that is, for bisections generating subsets which do not enclose necessarily the same quantity of area),
progressing in the description of these optimal bisections.

\begin{remark}
\label{re:justify}
We point out that a given set $C\in\C$ can be decomposed into two connected subsets
by means of divisions which are not bisections.
This can be done by using a simple closed curve \emph{entirely contained} in the interior of $C$.
In general, these decompositions are not good candidates for our problem
since, in view of Remark~\ref{re:boundary}, all of them have maximum relative diameter equal to $D(C)$,
which is an immediate upper bound for our functional.
Therefore, they will not be taken into account in these notes,
and we shall focus on the notion of bisection from Definition~\ref{de:bisection}.
\end{remark}

\begin{remark}
\label{re:uniqueness}
The \emph{uniqueness} of solution is not expected for this optimization problem,
as it usually occurs for questions involving the diameter functional.
In fact, if we have a minimizing bisection $P$ of a centrally symmetric planar convex body $C$,
slight modifications of $P$ can be done preserving the value of the maximum relative diameter, being minimizing as well.
This property suggests that a complete description of all the minimizing bisections of $C$ is not a feasible task.
\end{remark}

\subsection{Bisections by a straight line passing through the center of symmetry. }
\label{sub:lines}
Let $C\in\C$, and let $p$ be the center of symmetry of $C$.
The bisections of $C$ given by a straight line passing through $p$
possess some special properties and will play an important role for our problem.
Notice that for a bisection $P$ of this type,
the corresponding subsets $C_1$, $C_2$ will be \emph{congruent} due to the existing symmetry
(they will coincide up to the rotation of angle $\pi$ about $p$),
and so both of them will enclose the same quantity of area, and $D(C_1)=D(C_2)$.
Denoting by $v_1$, $v_2\in\ptl C$ the endpoints of the line segment determining $P$,
\cite[Prop.~3]{mps}
leads to
\begin{equation}
\label{eq:segment}
d_M(P)=\max\{d(v_1,x):x\in\ptl C\},
\end{equation}
where $d$ stands for the Euclidean distance in the plane.
Equality \eqref{eq:segment} implies that the maximum relative diameter of $P$
will be given by the distance between an endpoint of $P$ and any of its corresponding farthest points in $\ptl C$.

Apart from this, for a bisection $P$ determined by a straight line passing through $p$,
there is another equivalent expression for computing $d_M(P)$,
which will be useful along this work.
For any $x\in\ptl C_1$, we shall denote by $F_{C_1}(x)$ the set of farthest points from $x$ in $\ptl C_1$
(notice that $F_{C_1}(x)$ is non-empty due to compactness, and it may reduce to a single point).
Since $D(C_1)=D(C_2)$, we can just focus on \emph{one} of the subsets provided by $P$,
and using again~\cite[Prop.~3]{mps}, we will have that
\begin{equation}
\label{eq:farthest}
d_M(P)=D(C_1)=\max\{d(v_1,\phi_{C_1}(v_1)),\, d(v_2,\phi_{C_1}(v_2))\},
\end{equation}
where $v_1$, $v_2$ are the endpoints of $P$, and $\phi_{C_1}(v_i)\in F_{C_1}(v_i)$, $i=1,2$.

\begin{remark}
We note that it is easy to check that equalities~\eqref{eq:segment} and~\eqref{eq:farthest}
are not true neither for bisections given by a general planar curve,
nor by a straight line which does not pass through the center of symmetry, see Figure~\ref{fig:neq}.
\end{remark}

\begin{figure}[h]
  \includegraphics[width=0.97\textwidth]{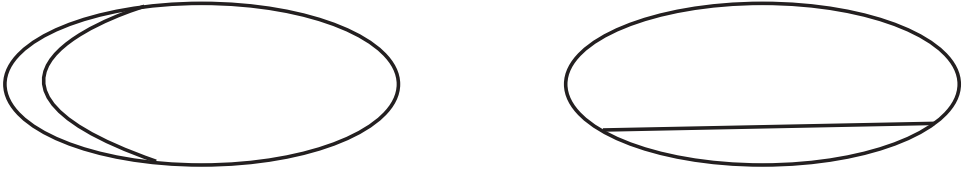}\\
    \caption{Equalities~\eqref{eq:segment} and~\eqref{eq:farthest} do not hold for these two bisections of the ellipse} \label{fig:neq}
\end{figure}

\begin{remark}
For a given $C\in\C$, and an arbitrary bisection $P$ of $C$
(not necessarily determined by a straight line),
with endpoints $v_1$, $v_2\in\ptl C$,
it may happen that $v_1\in F_{C_1}(v_2)$ and $v_2\in F_{C_1}(v_1)$.
In that case, it turns that $d_M(P)=d(v_1,v_2)$, in view of \eqref{eq:farthest},
and moreover, $C$ will be contained in the symmetric lens $B(v_1,d_M(P))\cap B(v_2,d_M(P))$,
where $B(x,r)$ denotes the Euclidean ball with center $x$ and radius $r$.
\end{remark}

\section{Main results}
\label{se:main}

In this section we obtain the main results of this paper.
Proposition~\ref{prop:areas}, which is an extension of~\cite[Prop.~4]{mps},
shows that there is always a minimizing bisection given by a straight line
passing through the center of symmetry of the set.
Proposition~\ref{prop:necesaria} states a necessary condition for a bisection to be minimizing,
and Theorem~\ref{th:main} establishes some conditions which allow to assert that a given bisection is minimizing.
These last two results (which are proved for bisections given by a straight line
passing through the center of symmetry)
reveal some of the geometric restrictions for being optimal.

\begin{proposition}
\label{prop:areas}
Let $C\in\C$, and let $p$ be the center of symmetry of $C$.
Let $P$ be a minimizing bisection for $d_M$ (whose subsets do not enclose necessarily equal areas).
Then there exists a bisection $P'$ given by a straight line passing through $p$ such that $d_M(P)=d_M(P')$.
\end{proposition}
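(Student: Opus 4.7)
The plan is to construct the line bisection $P'$ explicitly from the endpoints of $P$ and then apply equation~\eqref{eq:segment}. Let $v_1,v_2\in\ptl C$ denote the endpoints of the bisecting curve of $P$, and let $C_1,C_2$ be the corresponding subsets. Write $\rho$ for the rotation of angle $\pi$ about $p$. Since $C\in\C$, the antipodal point $v_1'=\rho(v_1)$ lies on $\ptl C$, and the line segment joining $v_1$ and $v_1'$ is a chord of $C$ passing through $p$ (it sits inside $C$ by convexity). I would take $P'$ to be the bisection determined by this chord.

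First I would apply \eqref{eq:segment} to $P'$, which gives
\[ d_M(P')=\max\{d(v_1,x):x\in\ptl C\}. \]
The crucial step is then to bound this maximum by $d_M(P)$. The key observation is that $v_1$, being an endpoint of the bisecting curve of $P$, belongs to both closures $\overline{C_1}$ and $\overline{C_2}$. Moreover, since $\overline{C_1}\cup\overline{C_2}=C$, every point $x\in\ptl C$ lies in at least one of these two closures. Hence for any $x\in\ptl C$ there is some $i\in\{1,2\}$ with $\{v_1,x\}\subset\overline{C_i}$, which yields
\[ d(v_1,x)\le D(C_i)\le\max\{D(C_1),D(C_2)\}=d_M(P). \]
Taking the supremum over $x\in\ptl C$ produces $d_M(P')\le d_M(P)$, and the reverse inequality is automatic because $P$ is minimizing; equality follows.

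I do not anticipate a substantive obstacle, since the argument reduces to invoking the explicit formula~\eqref{eq:segment} for bisections through the center together with the elementary fact that any boundary point of $C$ lies in one of the two closed pieces produced by $P$. The only minor technicality worth checking is that $P'$ is genuinely a bisection, which follows from convexity of $C$ and from $p\in\inte(C)$ (so $v_1\ne v_1'$), a standard property of non-degenerate centrally symmetric planar convex bodies.
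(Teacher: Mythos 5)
Your proposal is correct and follows essentially the same route as the paper's proof: construct $P'$ from $v_1$ and its antipode through $p$, apply equality~\eqref{eq:segment}, and bound the resulting distance by $d_M(P)$ using the fact that $v_1$ lies in $\overline{C_1}\cap\overline{C_2}$ while every boundary point lies in one of the two closed pieces. The only cosmetic difference is that you bound $d(v_1,x)$ uniformly over all $x\in\ptl C$, whereas the paper fixes the maximizing point $z$ and splits into the two cases $z\in\ptl C_1$, $z\in\ptl C_2$.
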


\begin{proof}
Let $C_1$, $C_2$ be the subsets determined by $P$,
and let $v_1$, $v_2$ be the endpoints of $P$ (notice that $v_1\in \overline{C_1}\cap\overline{C_2}$).
We can assume that $d_M(P)=D(C_1)\geq D(C_2)$.
Let $v_1'\in\ptl C$ be the symmetric point of $v_1$ with respect to $p$,
and consider the bisection $P'$ given by the segment $\overline{v_1\,v_1'}$ (which passes through $p$).

Taking into account~\eqref{eq:segment}, we have that $d_M(P')=d(v_1,z)$, for certain $z\in\ptl C$.
If $z\in\ptl C_1$, then $d(v_1,z)\leq D(C_1)=d_M(P)$. And if $z\in\ptl C_2$, then $d(v_1,z)\leq D(C_2)\leq D(C_1)=d_M(P)$.
Thus $d_M(P')=d(v_1,z)\leq d_M(P)$, which implies that $d_M(P')=d_M(P)$ since $P$ is minimizing.
\end{proof}

\begin{remark}
\label{re:line}
A consequence of Proposition~\ref{prop:areas} is that,
in order to find a minimizing bisection for a centrally symmetric planar convex body,
we can focus on bisections given by a straight line passing through the corresponding center of symmetry.
Note that for these bisections, the \emph{endpoints} are always \emph{symmetric}
with respect to the center of symmetry of the set.
\end{remark}

\begin{remark}
\label{re:refinement}
In fact, Proposition~\ref{prop:areas} shows that, if $C\in\C$ and $p$ denotes its center of symmetry,
for any bisection $P$ of $C$ we can find another bisection $P'$,
given by a straight line passing through $p$, with $d_M(P')\leq d_M(P)$.
\end{remark}

Next Proposition~\ref{prop:necesaria} states a necessary condition for a bisection
(given by a straight line passing through the center of symmetry, in view of Remark~\ref{re:line}) to be minimizing,
by means of the farthest points of the endpoints of the bisection.
In some sense, this result suggests a certain balance for the optimal divisions:
the distances between \emph{each} endpoint and its corresponding farthest point
must coincide (being also equal to the value of the maximum relative diameter, due to equality~\eqref{eq:farthest}).

\begin{proposition} (Necessary condition)
\label{prop:necesaria}
Let $C\in\C$, and let $p$ be the center of symmetry of $C$.
Let $P$ be a bisection of $C$ given by a straight line passing through $p$,
with endpoints $v_1$, $v_2\in\ptl C$, and subsets $C_1$, $C_2$. 
If $P$ is a minimizing bisection for $d_M$, then
\begin{equation}
\label{eq:necessary}
d_M(P)=d(v_1,\phi_{C_1}(v_1))=d(v_2,\phi_{C_1}(v_2)),
\end{equation}
where $\phi_{C_1}(v_i)\in F_{C_1}(v_i)$.
\end{proposition}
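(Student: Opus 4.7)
The plan is a contradiction argument: assume, without loss of generality, that $\alpha := d(v_1, \phi_{C_1}(v_1)) > d(v_2, \phi_{C_1}(v_2)) =: \beta$, so $d_M(P) = \alpha$ by~\eqref{eq:farthest}; I will exhibit a small rotation of $\ell$ about $p$ yielding a bisection $P_\theta$ with $d_M(P_\theta) < \alpha$, contradicting minimality.

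As a preliminary reduction, no $w \in F_{C_1}(v_1)$ can equal $v_2$: otherwise $\alpha = d(v_1, v_2)$, and since $v_1 \in \partial C_1$ this would force $\beta \ge d(v_2, v_1) = \alpha$, contradicting $\alpha > \beta$. Hence every $w \in F_{C_1}(v_1)$ lies on the open arc $A_1$ of $\partial C$ bounding $C_1$, and each vector $w - v_1$ points strictly into the interior of $C_1$ through the corner at $v_1$, inside the wedge bounded by the chord direction $v_2 - v_1$ and the tangent to $\partial C$ at $v_1$ pointing into $A_1$; by convexity of $C$, this wedge has angular aperture strictly less than $\pi$.

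For small $\theta$ let $P_\theta$ be the bisection by the line through $p$ rotated by $\theta$ from $\ell$, with endpoints $v_1(\theta), v_2(\theta) = 2p - v_1(\theta)$ varying continuously along $\partial C$, and subsets $C_1(\theta), C_2(\theta)$ converging to $C_1, C_2$ in Hausdorff distance. Define $g_i(\theta) := d(v_i(\theta), \phi_{C_1(\theta)}(v_i(\theta)))$; these are continuous in $\theta$ by joint continuity of the max-distance functional in the (base point, domain) pair, and by~\eqref{eq:farthest} one has $d_M(P_\theta) = \max(g_1(\theta), g_2(\theta))$. Since $g_1(0) > g_2(0)$, continuity forces $d_M(P_\theta) = g_1(\theta)$ for all $\theta$ sufficiently close to $0$, so it is enough to choose the rotation direction so that $g_1(\theta) < \alpha$ for small $\theta > 0$.

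Pick $\tau$ a tangent direction to $\partial C$ at $v_1$ (pointing into $A_1$) with $(w - v_1) \cdot \tau > 0$ for every $w \in F_{C_1}(v_1)$; the existence of such $\tau$ rests on the wedge observation above, since the vectors $\{w - v_1\}$ lie in an open cone of aperture below $\pi$. Rotating so that $v_1(\theta) = v_1 + \theta\tau + O(\theta^2)$ gives $d(v_1(\theta), w)^2 = \alpha^2 - 2\theta (w - v_1) \cdot \tau + O(\theta^2) < \alpha^2$ uniformly in $w \in F_{C_1}(v_1)$; a compactness argument splitting $\partial C_1(\theta)$ into a small neighborhood of $F_{C_1}(v_1)$ (where this bound propagates via Hausdorff convergence) and its complement (where the strict gap $d(v_1, \cdot) < \alpha - \delta$ persists under small perturbations) then yields $g_1(\theta) < \alpha$, contradicting minimality. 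The principal obstacle is precisely the last step of constructing a suitable $\tau$: while the wedge of vectors $w - v_1$ has aperture below $\pi$, the constraint that $\tau$ be tangent to $\partial C$ at $v_1$ leaves little freedom, so the borderline configurations (when the chord direction makes an obtuse angle with the tangent, or when $\partial C$ has a corner at $v_1$) need to be handled either by exploiting the tangent cone at a corner or by a second-order expansion at non-uniquely-farthest critical configurations.
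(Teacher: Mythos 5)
Your strategy is the same as the paper's: assume $d(v_1,\phi_{C_1}(v_1))>d(v_2,\phi_{C_1}(v_2))$, rotate the chord slightly about $p$ so that the endpoint near $v_1$ moves into $\ptl C_1$, and conclude by continuity that $d_M$ strictly decreases, contradicting minimality. Your handling of the secondary issues is actually more careful than the paper's (ruling out $v_2\in F_{C_1}(v_1)$, isolating $d_M(P_\theta)=g_1(\theta)$ for small $\theta$ via continuity, and splitting $\ptl C_1(\theta)$ into a neighbourhood of $F_{C_1}(v_1)$ and a region with a uniform gap).

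However, the step you yourself flag as the ``principal obstacle'' is a genuine gap, and it is exactly where the argument lives or dies. The existence of a tangent direction $\tau$ at $v_1$, pointing into the arc of $\ptl C_1$, with $(w-v_1)\cdot\tau>0$ for every $w\in F_{C_1}(v_1)$ does \emph{not} follow from the aperture-below-$\pi$ observation, because $\tau$ is not free: it is the (essentially unique) tangent ray of $\ptl C$ at $v_1$ into that arc. What one does get for free is the weak inequality: since $\alpha>\beta$ forces $d(v_1,w)=D(C_1)$ by \eqref{eq:farthest}, the pair $\{v_1,w\}$ is diametral in $C_1$, hence $C_1\subset \overline{B(w,\alpha)}$ and every $x\in C_1$ satisfies $2\,(x-v_1)\cdot(w-v_1)\ge|x-v_1|^2\ge0$; in particular $d(v_1(\theta),w)\le\alpha$ for the rotated endpoint, which yields $(w-v_1)\cdot\tau\ge 0$ and $d_M(P_\theta)\le d_M(P)$ without any choice of $\tau$. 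But equality can occur: if $\ptl C$ near $v_1$ osculates the circle $\ptl B(w,\alpha)$ to first order, then $(w-v_1)\cdot\tau=0$, and if $\ptl C$ there is strictly flatter than that circle, $d(v_1(\theta),w)$ actually \emph{increases} to second order for rotations in both directions, so no infinitesimal perturbation argument of this kind can produce the required strict decrease. Closing the proof requires either excluding this degenerate configuration or treating it by a separate, non-infinitesimal argument, and your proposal does neither. To be fair, the paper's own proof has exactly the same weak point: it asserts ``by construction'' that $d(\wtilde{v_1},\phi_{C_1}(v_1))<d(v_1,\phi_{C_1}(v_1))$, which is precisely the unproved strict inequality.
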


\begin{proof}
Assume that $d_M(P)=D(C_1)=d(v_1,\phi_{C_1}(v_1))>d(v_2,\phi_{C_1}(v_2))$.
By applying a slight rotation centered at $p$ to the straight line containing the segment $\overline{v_1\,v_2}$,
it is clear that we can consider a new bisection $\wtilde{P}$,
with new endpoints $\wtilde{v_1}$, $\wtilde{v_2}\in\ptl C$ and subsets $\wtilde{C_1}$, $\wtilde{C_2}$,
satisfying that $\wtilde{v_1}\in\ptl C_1$.
By construction, we shall have that $d(v_1,\phi_{C_1}(v_1))>d(\wtilde{v_1},\phi_{C_1}(v_1))$.
Due to the continuity of the Euclidean distance, as $\wtilde{v_i}$ is close to $v_i$, $i=1,2$,
it follows that the inequality
$d(\wtilde{v_1},\phi_{\wtilde{C_1}}(\wtilde{v_1}))>d(\wtilde{v_2},\phi_{\wtilde{C_1}}(\wtilde{v_2}))$
will be preserved, at least, infinitesimally.
This implies that $d_M(\wtilde{P})=d(\wtilde{v_1},\phi_{\wtilde{C_1}}(\wtilde{v_1}))$, by using~\eqref{eq:farthest}.
Moreover, $d(\wtilde{v_1},\phi_{\wtilde{C_1}}(\wtilde{v_1}))$ will be close to $d(\wtilde{v_1},\phi_{C_1}(v_1))$.
Thus,
$$d_M(\wtilde{P})=d(\wtilde{v_1},\phi_{\wtilde{C_1}}(\wtilde{v_1}))<d(v_1,\phi_{C_1}(v_1))=d_M(P),$$
which contradicts the minimizing character of $P$.
\end{proof}

\begin{remarks}
We shall mention some brief comments concerning Proposition~\ref{prop:necesaria}.
\begin{itemize}
\addtolength{\itemsep}{2mm}

\item[i)] The reverse of Proposition~\ref{prop:necesaria} does not hold in general:
this can be seen by considering, for instance, a rectangle and the bisection given by the orthogonal line to the shortest edges
passing through the center of symmetry, which satisfies \eqref{eq:necessary} but it is clearly not minimizing.
Therefore, some additional hypotheses are needed for an eventual sufficient condition.

\item[ii)] A geometric interpretation of this result
is that the maximum relative diameter of a minimizing bisection
(given by a straight line passing through the center of symmetry)
is necessarily provided by at least \emph{two different segments} in each congruent subset,
unless it is uniquely achieved by the distance between the endpoints of the bisection.

\item[iii)] The reader may compare Proposition~\ref{prop:necesaria} with \cite[Prop.~3]{mps}:
in the case of a \emph{minimizing} bisection given by a straight line
passing through the center of symmetry,
the farthest distances from \emph{both} endpoints must coincide,
providing the value of the maximum relative diameter.
\end{itemize}
\end{remarks}

We shall now prove our main Theorem~\ref{th:main},
which establishes some conditions to assert that a given bisection is minimizing.

\begin{theorem}
\label{th:main}
Let $C\in\C$, and let $p$ be the center of symmetry of $C$. 
Let $P$ be a bisection of $C$ given by a straight line passing through $p$, with endpoints $v_1$, $v_2$, and subsets $C_1$, $C_2$.
If there exist $\phi_{C_1}(v_1)\in F_{C_1}(v_1)$, $\phi_{C_1}(v_2)\in F_{C_1}(v_2)$ such that
\begin{itemize}
\addtolength{\itemsep}{2mm}
\item[i)] $d(v_1,\phi_{C_1}(v_1))=d(v_2,\phi_{C_1}(v_2))$, and

\item[ii)] $\ptl C_2\subset A_1\cup A_2$, where $A_i$ is the complement in the plane of the Euclidean ball
$B_i=B(\phi_{C_1}(v_i),d(v_i,\phi_{C_1}(v_i)))$, for $i=1,2$,
\end{itemize}
then $P$ is a minimizing bisection for $d_M$.
\end{theorem}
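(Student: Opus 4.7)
The plan is to argue via Proposition~\ref{prop:areas}, or rather its refinement stated in Remark~\ref{re:refinement}: for every bisection $Q$ of $C$ there is a bisection $Q'$ given by a straight line through $p$ with $d_M(Q')\leq d_M(Q)$. It therefore suffices to show $d_M(Q)\geq d_M(P)$ for an arbitrary bisection $Q$ of $C$ given by a straight line through $p$, with endpoints $w_1,w_2\in\ptl C$ (automatically symmetric with respect to $p$, so $w_2=2p-w_1$) and subsets $Q_1,Q_2$. Setting $d_0:=d(v_1,\phi_{C_1}(v_1))=d(v_2,\phi_{C_1}(v_2))$, hypothesis (i) together with~\eqref{eq:farthest} gives $d_M(P)=d_0$, so the target becomes $d_M(Q)\geq d_0$.

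The central step is to place one endpoint of $Q$ on $\ptl C_2$ so that hypothesis (ii) can be invoked. If $\{w_1,w_2\}=\{v_1,v_2\}$ then $Q=P$ and there is nothing to prove, so assume otherwise. Writing $\ptl C=(\ptl C\cap\overline{C_1})\cup(\ptl C\cap\overline{C_2})$ with intersection $\{v_1,v_2\}$, and using that $w_2=2p-w_1$ swaps the two arcs, exactly one of the endpoints — say $w_1$ — lies in $(\ptl C\cap\overline{C_2})\setminus\{v_1,v_2\}$, and in particular $w_1\in\ptl C_2$. Hypothesis (ii) then yields an index $j\in\{1,2\}$ with $w_1\in A_j$, that is,
\[
d(w_1,\phi_{C_1}(v_j))\geq d(v_j,\phi_{C_1}(v_j))=d_0.
\]

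To finish I would combine two elementary observations. Since $w_1$ is an endpoint of the chord defining $Q$, it lies in $\overline{Q_1}\cap\overline{Q_2}$; and since $\phi_{C_1}(v_j)\in\ptl C_1\subset C=\overline{Q_1}\cup\overline{Q_2}$, this point belongs to $\overline{Q_i}$ for some $i\in\{1,2\}$. Both points therefore sit in the same closed piece $\overline{Q_i}$, whence
\[
d_M(Q)\geq D(Q_i)\geq d(w_1,\phi_{C_1}(v_j))\geq d_0=d_M(P),
\]
as required.

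The main obstacle I anticipate is conceptual rather than technical: the argument is strikingly short, so the challenge is to resist invoking farthest-point machinery on $Q$ analogous to~\eqref{eq:farthest}. All the weight of hypothesis (ii) is absorbed into a single membership test applied at one endpoint of $Q$, while hypothesis (i) serves only to collapse the two potential values of $d_M(P)$ into a single threshold $d_0$. The only place one has to be careful is the central-symmetry case distinction that produces an endpoint $w_1$ on $\ptl C_2$, and in disposing of the trivial case $Q=P$.
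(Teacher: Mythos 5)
Your proposal is correct and follows essentially the same route as the paper's own proof: reduce to chords through $p$ via Proposition~\ref{prop:areas}, note that one endpoint of any such chord lies in $\ptl C_2$ and hence in some $A_j$ by hypothesis (ii), and conclude $d_M(Q)\geq d(w_1,\phi_{C_1}(v_j))\geq d_0=d_M(P)$. Your write-up is in fact slightly more careful than the paper's, since you justify explicitly why $w_1$ and $\phi_{C_1}(v_j)$ lie in a common closed piece of $Q$ and why exactly one endpoint falls on $\ptl C_2$.
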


\begin{proof}
Notice that $d_M(P)=d(v_1,\phi_{C_1}(v_1))=d(v_2,\phi_{C_1}(v_2))$,
in view of \eqref{eq:farthest} and the assumed hypothesis.
Consider now any bisection $\wtilde{P}$ of $C$
determined by a straight line passing through $p$,
with endpoints $\wtilde{v_1}$, $\wtilde{v_2}$.
One of these endpoints, say $\wtilde{v_2}$, will necessarily lie in $\ptl C_2$,
and so $\wtilde{v_2}\in A_1\cup A_2$.
Without loss of generality, we can assume that $\wtilde{v_2}\in A_1$.
Then, $d(\wtilde{v_2},\phi_{C_1}(v_1))\geq d(v_1,\phi_{C_1}(v_1))$, and so
$$d_M(\wtilde{P})\geq d(\wtilde{v_2},\phi_{C_1}(v_1))\geq d(v_1,\phi_{C_1}(v_1))=d_M(P),$$
which yields the minimizing character of $P$, taking into account Proposition~\ref{prop:areas}.
\end{proof}

\begin{remarks}
\label{re:re}
Regarding Theorem~\ref{th:main}, we point out the following comments: 
\begin{itemize}
\item[i)] The second hypothesis is equivalent to $\ptl C_2\cap(B_1\cap B_2)=\emptyset$, with the notation therein.
\item[ii)] It is not difficult to check that the second hypothesis implies that $d_M(P)\neq d(v_1,v_2)$.
\item[iii)] It may happen that $F_{C_1}(v_1)\cap F_{C_1}(v_2)$ is a non-empty set.
In that case, Theorem~\ref{th:main} can be applied trivially
and the corresponding bisection is minimizing.
\end{itemize}
\end{remarks}

\begin{remark}
We stress that, in order to apply Theorem~\ref{th:main},
we need to find \emph{appropriate} farthest points from the endpoints of the bisection.
That is, the hypotheses of Theorem~\ref{th:main} may not hold for \emph{all} possible choices for the corresponding farthest points,
as shown in the following example.
Consider a rhombus $C$ formed by joining two congruent equilateral triangles,
and the bisection given by the common edges, with endpoints $v_1$, $v_2\in\ptl C$, see Figure~\ref{fig:rombo}.
It is clear that $v_1\in F_{C_1}(v_2)$ and $v_2\in F_{C_1}(v_1)$,
but Theorem~\ref{th:main} cannot be used with those elections.
However, the vertex $q$ of $C$ belongs to $F_{C_1}(v_1)\cap F_{C_1}(v_2)$,
and so it is possible to apply the result for that farthest point, see Remarks~\ref{re:re},\ iii).
\end{remark}
\vspace{-5mm}

\begin{figure}[h]
   \includegraphics[width=0.5\textwidth]{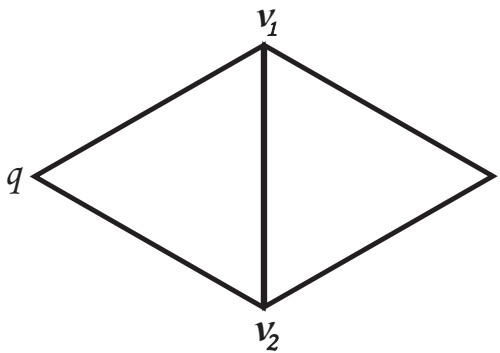}\\
  \caption{A rhombus $C$ formed by joining two congruent equilateral triangles, and a minimizing bisection of $C$}\label{fig:rombo}
\end{figure}

Previous Theorem~\ref{th:main} is not sharp,
in the sense that there exist minimizing bisections
which cannot be identified by means of this result.
This can be easily seen for a circle:
any bisection given by a diameter is optimal,
but the second hypothesis is not verified (observe Remarks~\ref{re:re},\ ii)).
We shall also illustrate this fact in the following Example~\ref{ex:6-gon}.

\begin{example}
\label{ex:6-gon}
Let $C$ be the centrally symmetric hexagon depicted in Figure~\ref{fig:malillo},
obtained by cutting symmetrically two opposite corners of a square
(the lengths of the resulting edges are $3.31$ and $5.66$ units, with non-right angles equal to $3\pi/4$). 
Consider the bisection $P$ determined by the segment joining the midpoints of the shortest edges of $C$.
It can be checked that $d_M(P)=8.17$, provided by the distance between the endpoint $v_1$ and $\phi_{C_1}(v_1)$, see Figure~\ref{fig:malillo2}.
For any other bisection $P'$ given by a straight line passing through the center of symmetry,
it follows that $d_M(P')>d_M(P)$,
since one of the corresponding subsets will contain the segment $\overline{v_1\,\phi_{C_1}(v_1)}$ or $\overline{v_2\,\phi_{C_1}(v_2)}$,
both with length equal to $d_M(P)$, 
or the segment $\overline{x\,\phi_{C_2}(v_2)}$ or $\overline{x\,\phi_{C_2}(v_1)}$, both with length equal to $8.34$
(where $\phi_{C_2}(v_i)$ is the farthest point from $v_i$ in $\ptl C_2$, $i=1,2$).
Therefore, $P$ is a minimizing bisection,
but Theorem~\ref{th:main} cannot be applied because the second hypothesis does not hold, as shown in Figure~\ref{fig:malillo2}.
\end{example}

\begin{figure}[h]
    \includegraphics[width=1\textwidth]{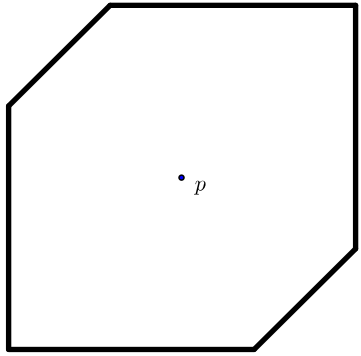}\\[-6mm]
  \caption{Centrally symmetric hexagon obtained by cutting a square}
\label{fig:malillo}
\end{figure}

\begin{figure}[h]
    \includegraphics[width=1\textwidth]{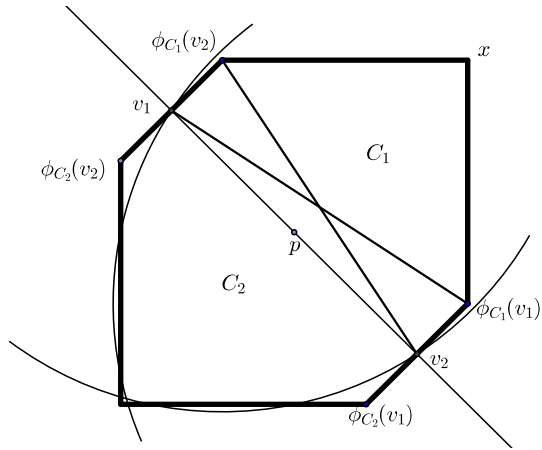}\\
  \caption{Theorem~\ref{th:main} cannot be applied since two pieces of $\ptl C_2$ are not contained in $A_1\cup A_2$}
  \label{fig:malillo2}
\end{figure}

\section{Standard bisection}
\label{se:standard}

In this section we shall introduce a particular bisection for a centrally symmetric planar convex body,
which is called \emph{standard bisection}.
Its construction is analogous to the one described in \cite[\S.~3]{extending},
which concerns the standard $k$-partitions of $k$-rotationally symmetric planar convex bodies (for $k\in\nn$, $k\geq 3$).
We shall emphasize here the different behavior of the standard bisections in our setting
with respect to those ones, in terms of optimality and uniqueness.

\begin{definition}
Let $C\in\C$.
A standard bisection of $C$ is a decomposition of $C$ determined by
two symmetric inradius segments of $C$.
We shall denote it by $P_2(C)$, or simply $P_2$.
\end{definition}

Note that, for a given set in $\C$, it is always possible to construct an associated standard bisection
(due to the existing symmetry),
which will consist of a line segment passing through the center of symmetry.
In fact, it will be one of the shortest chords of the set passing through that point, see Figure~\ref{fig:standards}.

\begin{figure}[h]
  \includegraphics[width=0.7\textwidth]{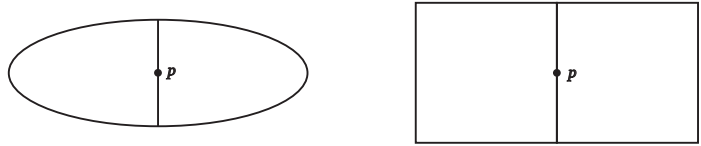}\\
  \caption{Standard bisections for an ellipse and a rectangle}\label{fig:standards}
\end{figure}

As indicated in the Introduction,
it is known \cite[Th.~4.5]{extending} that, for any $k$-rotationally symmetric planar convex body,
its corresponding standard $k$-partition (defined by means of $k$ inradius segments symmetrically placed) is always
minimizing for the maximum relative diameter functional, when $k\geq 3$, see Figure~\ref{fig:some}.
It is then natural to wonder whether the standard bisection is minimizing in
our centrally symmetric case (which corresponds to $k=2$).
This holds for a wide variety of sets of our class,
but it is not true in general, as shown in the following Example~\ref{ex:non}.

\begin{example}
\label{ex:non}
Let $C$ be a rhombus, and consider an associated standard bisection $P_2$ of $C$,
depicted in the left-hand side of Figure~\ref{fig:rombo2}.
It is clear that $P_2$ is not minimizing, since the bisection $P$
determined by the vertical line segment passing through the center of symmetry
(right-hand side of Figure~\ref{fig:rombo2}) has smaller value for $d_M$.
In fact, $P_2$ does not satisfy the necessary condition from Proposition~\ref{prop:necesaria},
and Theorem~\ref{th:main} yields that $P$ is a minimizing bisection of $C$.
\end{example}

\begin{figure}[h]
  \includegraphics[width=0.68\textwidth]{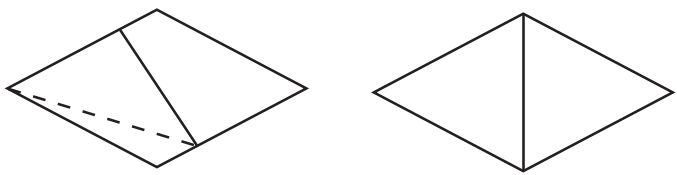}\\
  \caption{The standard bisection $P_2$ of the rhombus is not minimizing}\label{fig:rombo2}
\end{figure}

One might think that the standard bisection from Example~\ref{ex:non}
is not minimizing essentially because the necessary condition
from Proposition~\ref{prop:necesaria} does not hold.
The following example shows that even when this necessary condition is satisfied,
we cannot assure the minimizing character of a given standard bisection.

\begin{example}
\label{ex:salvador}
Let $S$ be a square, and call $v_1$, $v_2$ the midpoints of the upper and lower edges,
and $w_1$, $w_2$ the midpoints of the other two edges,
see Figure~\ref{fig:contraejemplos}.
Consider $C=S\cap B(v_1,d(v_1,v_2))\cap B(v_2,d(v_1,v_2))$, which is a centrally symmetric planar convex body.
It is clear that the bisection $P_2$ of $C$ provided by the segment $\overline{v_1\,v_2}$ is standard,
as well as the bisection $P_2'$ given by $\overline{w_1\,w_2}$.
Both of them satisfy the necessary condition from Proposition~\ref{prop:necesaria},
but we have that $d_M(P_2')=d(w_1,x)>d(v_1,v_2)=d_M(P_2)$, and so $P_2'$ is not minimizing.
Moreover, Theorem~\ref{th:main} implies that $P_2$ is a minimizing bisection for $d_M$.
\end{example}

\begin{figure}[htp]
\centering{
\subfigure{\includegraphics[width=0.47\textwidth]{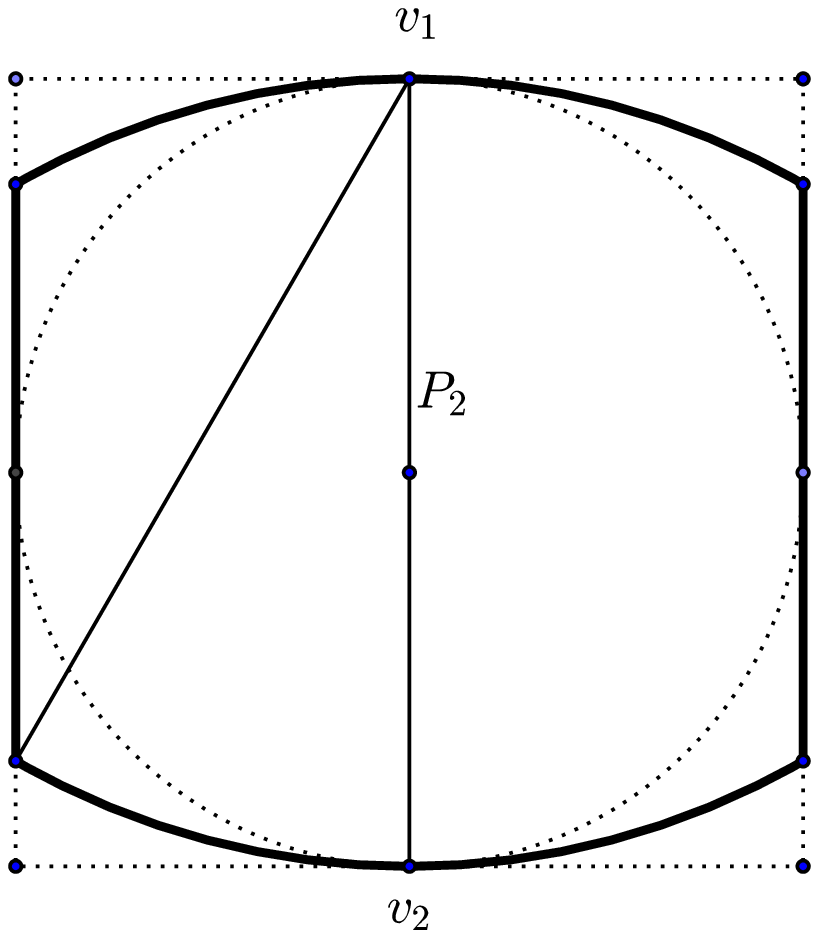}}
\hspace{0.01\textwidth}
\subfigure{\includegraphics[width=0.47\textwidth]{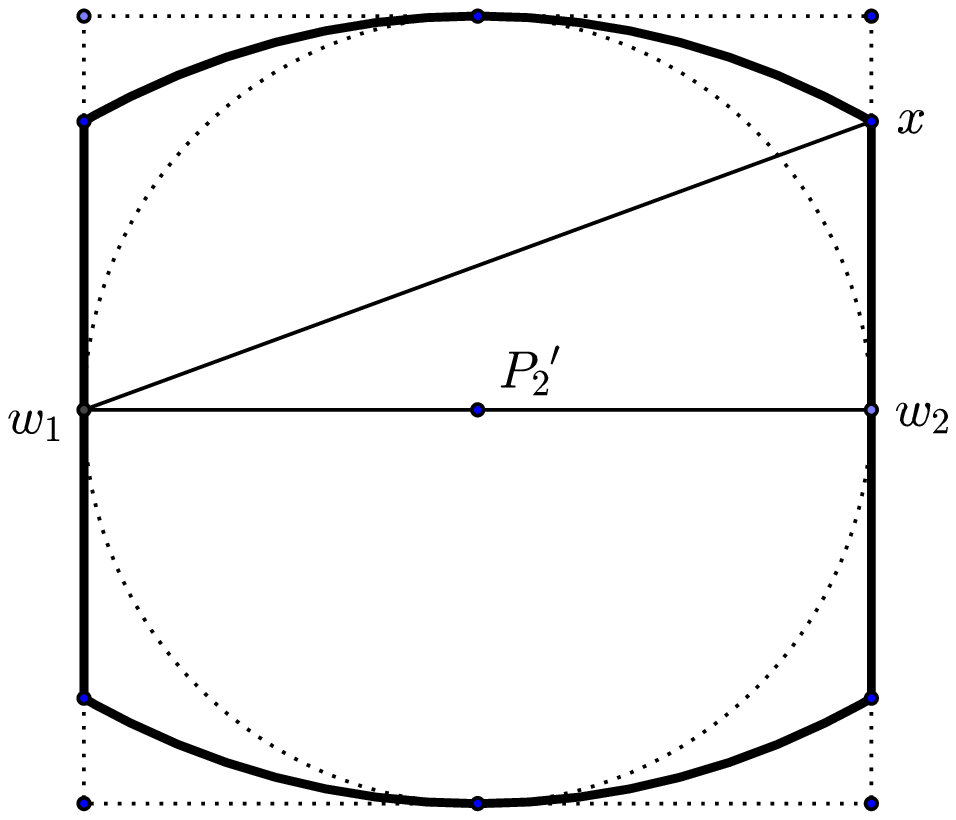}}}
\caption{$P_2$ and $P_2'$ are two standard bisections of $C$. We have that $P_2$ is minimizing, while $P_2'$ is not}
\label{fig:contraejemplos}
\end{figure}

The two previous Examples~\ref{ex:non} and~\ref{ex:salvador} reveal that
a standard bisection is not optimal in general.
Although some partial results can be obtained in some restrictive situations,
we shall refer to Theorem~\ref{th:main} in order to determine if a given one is minimizing.

\begin{remark}
Consider $C\in\C$ and a standard bisection $P_2$ of $C$, with endpoints $v_1$, $v_2\in\ptl C$.
If $d_M(P_2)=d(v_1,v_2)$, then $P_2$ is necessarily minimizing
(recall that, in this case, we cannot apply Theorem~\ref{th:main}, see Remarks~\ref{re:re},\ ii)).
The reason is that for any other bisection $P$ determined
by a straight line passing through the center of symmetry $p\in C$, with endpoints $w_1$, $w_2\in\ptl C$,
it follows that $d(w_1,w_2)\geq d(v_1,v_2)$, since $\overline{p\,v_i}$ is an inradius segment of $C$, $i=1,2$,
and so $d_M(P)\geq d(w_1,w_2)\geq d(v_1,v_2)=d_M(P_2)$.
This property does not hold for bisections which are not standard:
if we consider an ellipse $C$, and the bisection $P$ determined by the segment $\overline{v_1\,v_2}$,
where $D(C)=d(v_1,v_2)$, then we clearly have that $P$ is not minimizing, although $d_M(P)=d(v_1,v_2)$.
\end{remark}

\subsection{Uniqueness of the standard bisection}
\label{sub:uniqueness}
In general, the standard bisection of a centrally symmetric planar convex body is not uniquely defined:
we clearly have two different ones for a given square (joining the midpoints of each pair of opposite edges),
and an infinite amount of them for a circle (provided by the diameter segments).
In these two cases, the maximum relative diameter of the different standard bisections coincide,
and so this fact is not relevant for our optimization problem.
However, the lack of uniqueness may also refer to the values of the maximum relative diameter,
as shown in the following Example~\ref{ex:capbody}.

\begin{example}
\label{ex:capbody}
Let $C$ be a planar \emph{cap body}, that is,
the convex hull of a circle and two exterior symmetric points with respect to the center
(which will be called the vertices of $C$).
This centrally symmetric planar convex body possesses
an infinite quantity of associated standard bisections,
determined by each pair of symmetric points lying in the circular pieces of $\ptl C$.
In this setting, if the vertices of $C$ are far enough from the center of the circle,
all the standard bisections of $C$ will have \emph{different values} for the maximum relative diameter.
For instance, for the two standard bisections from Figure~\ref{fig:capbody},
the maximum relative diameter equals the distance
between an endpoint of the bisection and a vertex of the cap body,
thus attaining distinct values.
We point out that the same happens for the standard bisections of the set from Example~\ref{ex:6-gon},
as indicated in Section~\ref{se:examples} below.
\end{example}

\begin{figure}[ht]
    \includegraphics[width=0.7\textwidth]{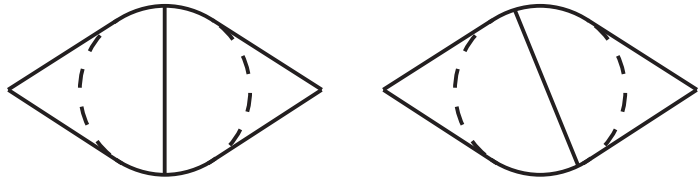}\\
  \caption{Two standard bisections with different values for $d_M$}\label{fig:capbody}
\end{figure}

\begin{remark}
The behavior described in Example~\ref{ex:capbody} is another remarkable peculiarity
of our problem with respect to the analogous one for $k$-rotationally symmetric planar convex bodies ($k\geq3$),
where different standard $k$-partitions always yield the same value
for the maximum relative diameter, due to~\cite[Lemma~3.2]{extending}.
\end{remark}

\begin{remark} For a given set $C$ in $\C$,
the standard bisection of $C$ is uniquely defined
if and only if the associated inball touches $\ptl C$ only twice.
\end{remark}

\section{Some Examples}
\label{se:examples}

In this section we collect several examples of centrally symmetric planar convex bodies,
indicating one of the minimizing bisections in each case.

The corresponding standard bisections are minimizing for the square, the rectangle or the ellipse,
by direct application of Theorem~\ref{th:main}.

The case of the circle is special, since the maximum relative diameter functional
is constant for \emph{any} arbitrary bisection (such a constant is the diameter of the circle).
Therefore, any bisection of the circle can be considered minimizing.

\begin{figure}[h]
  \includegraphics[width=0.88\textwidth]{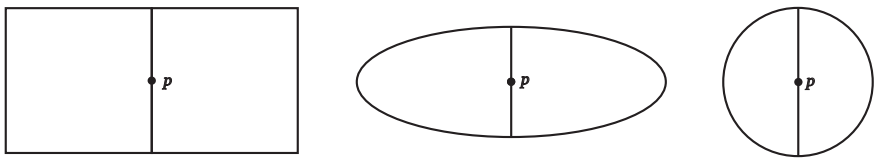}\\
  \caption{Minimizing bisections for the rectangle, the ellipse and the circle}
\end{figure}

For the hexagon treated in Example~\ref{ex:6-gon}, which is depicted in Figure~\ref{fig:malillo},
we have already described a minimizing bisection.
We point out that such a bisection is standard, and that
there are two other standard bisections for this set
(joining each pair of larger opposite symmetric edges), which are not minimizing
(it can be checked that the necessary condition from Proposition~\ref{prop:necesaria} does not hold).
Recall that Theorem~\ref{th:main} cannot be applied for this optimal bisection.

We have studied the rhombus in Example~\ref{ex:non}. The standard bisection is not minimizing,
and Theorem~\ref{th:main} yields that the bisection given by a vertical straight line passing through the
center of symmetry minimizes $d_M$, see Figure~\ref{fig:rombo2}.

For the cap body from Example~\ref{ex:capbody},
we have already indicated that there are an infinite amount of associated standard bisections,
each of them with a different value for the maximum relative diameter when the vertices are
far enough from the center of symmetry, see Figure~\ref{fig:capbody}.
In these situations, among all of them, the one determined by a vertical straight line passing through the center of symmetry
is the unique minimizing bisection, by applying Proposition~\ref{prop:necesaria} and Theorem~\ref{th:main}.
We point out that certain variations of this set provide examples with infinite standard bisections,
being none of them minimizing.

\begin{figure}[h]
  \includegraphics[width=0.73\textwidth]{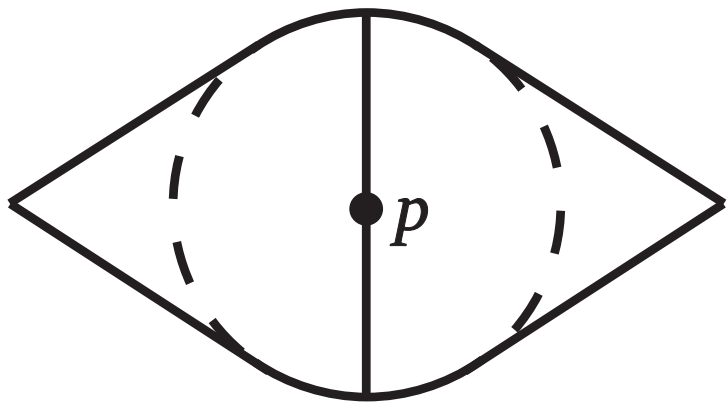}\\
  \caption{Minimizing bisection for a cap body}
\end{figure}

Finally, for the centrally symmetric planar convex body from Figure~\ref{fig:cut-ellipse},
obtained by cutting symmetrically a given ellipse, we have that the associated standard bisection is not minimizing.
By using Theorem~\ref{th:main}, it follows that a minimizing bisection is the one shown in Figure~\ref{fig:cut-ellipse}.

\begin{figure}[h]
  \includegraphics[width=0.73\textwidth]{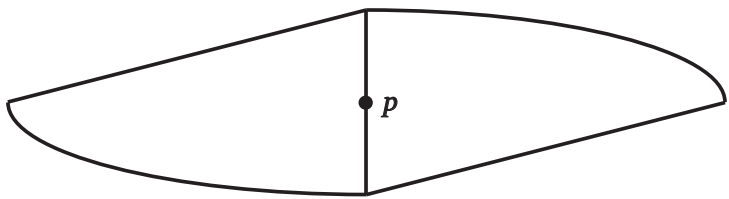}\\
  \caption{Minimizing bisection for this centrally symmetric planar convex body}\label{fig:cut-ellipse}
\end{figure}

\section{Some remarks}
\label{se:comments}

We finish these notes with some comments related with our optimization problem.

\subsection{Optimal set. } 
Another interesting question for this problem is searching for the \emph{optimal sets},
that is, the centrally symmetric planar convex bodies \emph{of unit area} with
the \emph{minimum} possible value for the maximum relative diameter functional.
The unit-area condition here is required just as a normalization for the sets of our class. 
In this setting, the optimal set is unique and has been obtained in~\cite[Example~2.3 and Th.~5]{mps}: 
it consists of the intersection of a certain strip (delimited by two parallel lines) and a symmetric lens.

\subsection{Dual problems. } There are some dual optimization problems to the one discussed in this paper,
but they are worthless since their solutions are trivial.
For instance, if we are interested in the bisections attaining the \emph{maximum} possible value for $d_M$,
it is clear that we can consider a bisection determined by a diameter segment of the set
(and so, the diameter will be such maximum value).
In fact, this will happen for any bisection with a subset
containing two points whose distance equals the diameter of the set.

On the other hand, we can consider the \emph{minimum relative diameter} functional, defined as
$$d_m(P)=\min\{D(C_1),D(C_2)\}, $$
where $P$ is a bisection with subsets $C_1$, $C_2$.
This functional has been already studied in some previous works, see~\cite{cms,css}.
It is easy to check that $d_m$ tends to zero for bisections with one of its associated subsets being reduced to a point,
and that its maximum value will be attained again by a bisection given by a diameter segment.

\subsection{Relation with the Borsuk number. } For a given $C\in\C$,
the optimization problem for the maximum relative diameter functional treated in this paper is meaningless when
that functional is constant over all the bisections of $C$ 
(in that case, all the bisections can be seen as minimizing).
This situation only happens when $C$ is a circle, and it is equivalent to the following property:
the unique centrally symmetric planar convex body with
Borsuk number equal to three is the circle (see~\cite{danuta, number} and references therein for details on this question).

\noindent {\bf Acknowledgements.} 
The first author is partially supported by the project MTM2013-48371-C2-1-P (Ministerio de Econom\'ia e Innovaci\'on),
and by Junta de Andaluc\'ia grant FQM-325 (Consejer\'ia de Econom\'ia, Innovaci\'on, Ciencia y Empleo).
The second author is partially supported by MINECO/FEDER project MTM2015-65430-P and ``Programa de Ayudas a Grupos de
Excelencia de la Regi\'on de Murcia'', Fundaci\'on S\'eneca, 19901/GERM/15.

\end{document}